\newtheorem{theorem}{Theorem}
\newtheorem*{corollary}{Corollary}
\newtheorem*{proposition}{Proposition}
\newcommand{\diam}{\operatorname{diam}}
\begin{document}

\title[]{on the diffusion geometry of graph\\ laplacians and applications}
\keywords{Graph Laplacian, extrema, clustering, eigenvector centrality, hot spots, spectral embedding.}
\subjclass[2010]{35R02 (primary) and 35J05, 94C15 (secondary)} 

\author[]{Xiuyuan Cheng}
\address[Xiuyuan Cheng]{Applied Mathematics Program, Yale University, New Haven, CT 06510, USA}
\email{xiuyuan.cheng@yale.edu}

\author[]{Manas Rachh}
\address[Manas Rachh]{Applied Mathematics Program, Yale University, New Haven, CT 06510, USA}
\email{manas.rachh@yale.edu}

\author[]{Stefan Steinerberger}
\address[Stefan Steinerberger]{Department of Mathematics, Yale University, New Haven, CT 06510, USA}
\email{stefan.steinerberger@yale.edu}

\begin{abstract} 
We study directed, weighted graphs $G=(V,E)$  and 
consider the (not necessarily symmetric) 
averaging operator 
$$ (\mathcal{L}u)(i) = -\sum_{j \sim_{} i}{p_{ij} (u(j) - u(i))},$$
where $p_{ij}$ are normalized edge weights. 
Given a vertex $i \in V$, we define the diffusion distance to a set $B \subset V$ as the smallest 
number of steps $d_{B}(i) \in \mathbb{N}$
required for half of all random walks started in $i$ and moving 
randomly with respect to the weights $p_{ij}$ to visit $B$ within
$d_{B}(i)$ steps.
Our main result is that the eigenfunctions interact nicely with 
this notion of distance.
In particular, if $u$ satisfies 
$\mathcal{L}u = \lambda u$ on $V$ and 
$$ B = \left\{ i \in V:    - \varepsilon   \leq u(i) \leq \varepsilon   \right\} \neq \emptyset,$$
then, for all $i \in V$,
$$ d_{B}(i) \log{\left(  \frac{1}{|1-\lambda|} \right) } 
\geq    \log{\left( \frac{ |u(i)| }{\|u\|_{L^{\infty}}}  \right)}  -  
\log{\left(\frac{1}{2} + \varepsilon\right)}.$$
$d_B(i)$ is a remarkably good approximation of $|u|$ in the sense of
having very high correlation.
The result implies that the 
classical one-dimensional spectral embedding preserves 
particular aspects of geometry in the presence of clustered data.
We also give a continuous variant of the result which
has a connection to the hot spots conjecture. 
\end{abstract}
\maketitle

\section{Introduction and main result}
\subsection{Introduction.} 
This paper is motivated by the following continuous problem: 
let $\Omega \subset \mathbb{R}^2$ be a compact set with smooth boundary
and consider an eigenfunction of the Laplacian
$$ -\Delta u = \lambda u$$
with Dirichlet boundary conditions on $\partial \Omega$. 
A natural question (also for more general elliptic equations) is whether 
it is possible to
specify in advance and using only the geometry of $\Omega$ where the maximum 
is located. 
A recent result of the second and third author \cite{manas} shows the sharp 
result that the location of the maximum is at 
least $c\cdot \lambda^{-1/2}$ away
from the boundary when the domain $\Omega$ is simply
connected. 
Here $c$ is a universal constant, see \cite{ban2, stein1} for a detailed
discussion. 
If we consider Neumann conditions on the boundary $\partial u/\partial n = 0$, 
then it follows from the 
hot spots conjecture of Rauch \cite{burd2} that 
both maximum and minimum of the eigenfunction should be assumed on
the boundary.
This conjecture is known \cite{burd} to fail in general 
but is widely believed to be true at least for convex domains. 
The purpose of our paper is to prove an optimal inequality for a related 
problem on graphs.
We derive a certain type of guarantee that spectral 
clustering is well-behaved in the presence of clusters.
Furthermore, we prove a continuous version of our result
wherein we show that the location of both maximum and minimum of the 
first nontrivial Neumann eigenfunction for Laplace's equation is not too 
close to the nodal line $\left\{x: u(x) = 0\right\}$.

\subsection{Setup}
 Let $B = (V,E)$ be a connected, directed, weighted graph with 
 normalized edge weights $p_{ij}$ 
$$ \sum_{j = 1}^{n}{p_{ij}} = 1.$$
 We introduce a Laplacian-type operator $\mathcal{L}$ acting on functions $u: V \rightarrow \mathbb{R}$
as 
$$ (\mathcal{L}u)(i) = -\sum_{j =1}^{n}{p_{ij}(u(j) - u(i))}.$$
Note that, at this level of generality, the operator $\mathcal{L}$ need 
not be self-adjoint.
This formulation is motivated by the mean-value property of the 
Laplacian in the continuous setting, the negative sign ensures that the 
operator is positive definite.
We additionally assume, to avoid nontrivial counterexamples, that a random walk (with transition probabilities $p_{ij}$) can ultimately travel from every vertex to every other vertex. 
There are naturally associated eigenvalues $u$ which satisfy
$$
 \mathcal{L} u = \lambda u ~\quad \mbox{on}~V 
$$
It is easy to see (using either the Perron-Frobenius theorem or the 
Gershgorin circle theorem) that $|1-\lambda| < 1$. 
Our main statement is that if $|u(i)|$ is large, then $i$ is `far away' 
from the set of vertices where $|u|$ is small, where the notion of 
distance is defined below.

\subsection{Diffusion distance to the boundary.} 
Let $x:\mathbb{N} \to V$ denote a random walk associated with a 
Markov chain on the graph with transition probabilities given by
$$ \mathbb{P}\left( x(t+1) = j ~ \big| ~ x(t) = i\right) = p_{ij} \, . $$
For given $i \in V$ the {\em diffusion distance} 
$d_{B}(i)$ is defined as the smallest integer $k \in \mathbb{N}$ such 
that the likelihood of a random walk started in $i$ is visiting $B$ within $k$ time steps is atleast $1/2$
$$ d_{B} (i) = \inf_{} \left\{ k  \in \mathbb{N}: 
\mathbb{P}( \exists ~k'<k \, , x(k') \in B ~ \big| ~ x(0)=i) \geq 
\frac{1}{2} \right\}.$$

Our setup implies that the diffusion distance $d_{B}$ is always finite.
For example, consider the standard random walk on $\mathbb{Z}$ 
and let $B=\{1,10 \}$ (Figure~\ref{fig:p10exmp}).
We see 
that the diffusion distance has `quadratic growth away from the boundary' 
due to the fact that a random walk on $\mathbb{Z}$ only 
travels up to distance $\sim \sqrt{k}$ in $k$ steps.

\begin{center}
\begin{figure}[h!]
\begin{tikzpicture}[scale =1.2]
\draw [thick] (0,0) -- (9,0);
\filldraw (0,0) circle (0.08cm);
\draw [ thick] (-0.2, -0.2) -- (-0.2, 0.2);
\draw [ thick] (-0.2, -0.2) -- (0.2, -0.2);
\draw [ thick] (0.2, -0.2) -- (0.2, 0.2);
\draw [ thick] (-0.2, 0.2) -- (0.2, 0.2);
\node at (0, -0.5) {$B$};
\node at (-0.5, 0.5) {$d_{B}$};
\node at (0, 0.5) {$0$};
\filldraw (1,0) circle (0.08cm);
\node at (1, 0.5) {$1$};
\filldraw (2,0) circle (0.08cm);
\node at (2, 0.5) {$8$};
\filldraw (3,0) circle (0.08cm);
\node at (3, 0.5) {$13$};
\filldraw (4,0) circle (0.08cm);
\node at (4, 0.5) {$15$};
\filldraw (5,0) circle (0.08cm);
\node at (5, 0.5) {$15$};
\filldraw (6,0) circle (0.08cm);
\node at (6, 0.5) {$13$};
\filldraw (7,0) circle (0.08cm);
\node at (7, 0.5) {$8$};
\filldraw (8,0) circle (0.08cm);
\node at (8, 0.5) {$1$};
\filldraw (9,0) circle (0.08cm);
\node at (9, 0.5) {$0$};
\draw [ thick] (9-0.2, -0.2) -- (9-0.2, 0.2);
\draw [ thick] (9-0.2, -0.2) -- (9+0.2, -0.2);
\draw [ thick] (9.2, -0.2) -- (9.2, 0.2);
\draw [ thick] (9-0.2, 0.2) -- (9.2, 0.2);
\node at (9, -0.5) {$B$};
\end{tikzpicture}
\caption{Diffusion distance from $B=\left\{1,10\right\}$ for the standard
random walk on $\mathbb{Z}$.}
\label{fig:p10exmp}
\end{figure}
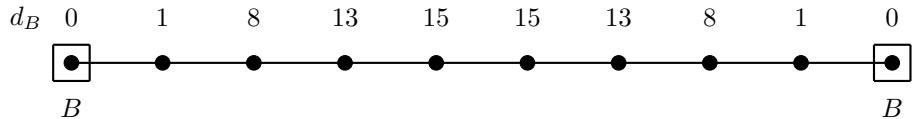
\end{center}
\vspace{-10pt}

The notion is uniquely suited for graphs: if a graph is a 
fine discretization of a convex domain $\Omega \subset \mathbb{R}^n$, then the
diffusion distance scales like a rescaling of the squared distance to the boundary and this scaling is independent of the dimension $n$ (except for constants),
however, a general graph may not have constant `dimensionality' and the diffusion distance is naturally adaptive. In the continuous
case, it is also highly related to the notions of capacity and harmonic measure.

\subsection{Spectral embedding} Let us  
assume that the graph $G$ essentially decomposes into two
clusters of roughly equal size connected via a bottleneck (see Fig. 2). 
Classical intuition suggests that the first (nontrivial) eigenfunction $u$ 
(associated to the largest real eigenvalue)
of $\mathcal{L}$ will be negative on one cluster, positive on the other 
cluster and $\sim 0$ in the bottleneck -- this intuition has been made precise in a variety of different ways (most famously in Cheeger's inequality). In particular, the map
$$ u:V \rightarrow \mathbb{R}$$
can be understood as a classifier: for any element $i \in V$, the sign of 
$u(i)$ allows to determine the cluster which contains $i$. 
Or, put differently, $u$ is effective in isolating
this basic geometric feature. 
However, one would naturally like to go further and argue that, while the sign 
of $u(i)$ determines the cluster, the magnitude $|u(i)|$ 
should be able to serve as a quantitative measure of certainty of that 
estimate. In particular, the value $i$ for which $u$ 
assumes its minimum should be the most typical
representitive of its cluster that is most easily distinguished from elements 
in the other cluster (and similarly for the vertex $i$ in which $u$ assumes 
the maximum). 

\begin{center}
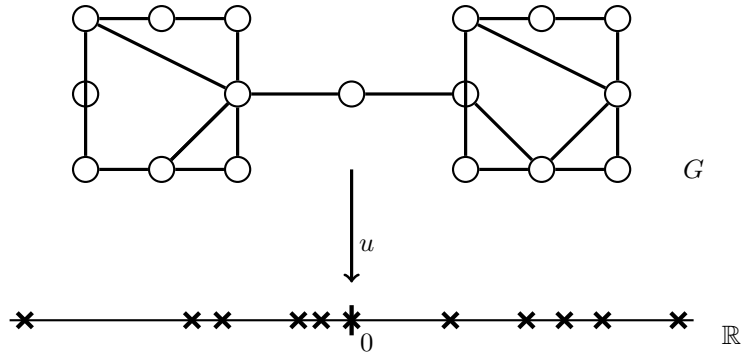
\begin{figure}[h!]
\begin{tikzpicture}[scale=1]
\begin{scope}[every node/.style={circle,thick,draw}]
    \node (A) at (0,0) {};
    \node (B) at (1,0) {};
    \node (D) at (1,2) {};
    \node (F) at (-1,2) {};
    \node (G) at (-1,0) {};
    \node (C) at (1,1) {};
    \node (E) at (0,2) {};
    \node (H) at (-1,1) {};

    \node (K) at (2.5,1) {};

    \node (A1) at (5,0) {};
    \node (B1) at (6,0) {};
    \node (D1) at (6,2) {};
    \node (F1) at (5-1,2) {};
    \node (G1) at (5-1,0) {};
    \node (C1) at (6,1) {};
    \node (E1) at (5,2) {};
    \node (H1) at (5-1,1) {};
\end{scope}

\begin{scope}[every edge/.style={draw=black,very thick}]
    \path [] (A) edge node {} (B);
    \path [] (B) edge node {} (C);
    \path [] (D) edge node {} (C);
    \path [] (D) edge node {} (E);
    \path [] (F) edge node {} (E);
    \path [] (F) edge node {} (G);
    \path [] (G) edge node {} (A);

    \path [] (C) edge node {} (K);   
    \path [] (H1) edge node {} (K);   
     \path [] (A) edge node {} (C);
    \path [] (F) edge node {} (C);

    \path [] (A1) edge node {} (B1);
    \path [] (B1) edge node {} (C1);
    \path [] (D1) edge node {} (C1);
    \path [] (D1) edge node {} (E1);
    \path [] (F1) edge node {} (E1);
    \path [] (F1) edge node {} (G1);
    \path [] (G1) edge node {} (A1);

     \path [] (A1) edge node {} (C1);
    \path [] (F1) edge node {} (C1);
     \path [] (A) edge node {} (C);
    \path [] (H1) edge node {} (A1);
\end{scope}
\draw [->, very thick] (2.5, 0) -- (2.5, -1.5);
\node at (2.7, -1) {$u$};
\node at (7.5, -2.2) {$\mathbb{R}$};
\node at (7, 0) {$G$};
\draw [thick] ( -2,-2) -- (7,-2);
\draw [ultra thick] (2.5, -1.8) -- (2.5, -2.2);
\node at (2.7, -2.3) {$0$};
\draw [ultra thick] (2.4, -1.9) -- (2.6, -2.1);
\draw [ultra thick] (2.6, -1.9) -- (2.4, -2.1);

\draw [ultra thick] (2, -1.9) -- (2.2, -2.1);
\draw [ultra thick] (2.2, -1.9) -- (2, -2.1);
\draw [ultra thick] (1.7, -1.9) -- (1.9, -2.1);
\draw [ultra thick] (1.9, -1.9) -- (1.7, -2.1);
\draw [ultra thick] (0.7, -1.9) -- (0.9, -2.1);
\draw [ultra thick] (0.9, -1.9) -- (0.7, -2.1);
\draw [ultra thick] (0.3, -1.9) -- (0.5, -2.1);
\draw [ultra thick] (0.5, -1.9) -- (0.3, -2.1);
\draw [ultra thick] (3.7, -1.9) -- (3.9, -2.1);
\draw [ultra thick] (3.9, -1.9) -- (3.7, -2.1);
\draw [ultra thick] (4.7, -1.9) -- (4.9, -2.1);
\draw [ultra thick] (4.9, -1.9) -- (4.7, -2.1);
\draw [ultra thick] (5.7, -1.9) -- (5.9, -2.1);
\draw [ultra thick] (5.9, -1.9) -- (5.7, -2.1);
\draw [ultra thick] (5.4, -1.9) -- (5.2, -2.1);
\draw [ultra thick] (5.2, -1.9) -- (5.4, -2.1);
\draw [ultra thick] (-1.7, -1.9) -- (-1.9, -2.1);
\draw [ultra thick] (-1.9, -1.9) -- (-1.7, -2.1);
\draw [ultra thick] (6.7, -1.9) -- (6.9, -2.1);
\draw [ultra thick] (6.9, -1.9) -- (6.7, -2.1);
\end{tikzpicture}
\caption{A graph $G$ and the embedding $u:V \rightarrow \mathbb{R}$ generated by the first eigenfunction of the Graph Laplacian $\mathcal{L}$.}
\end{figure}
\end{center}

\textbf{Example.}  Before stating the main result, we illustrate this notion by giving an example: we take all handwritten digits 0 and 1 from the MNIST data set.
Figure~\ref{fig:specembed} shows a spectral embedding into two dimensions: 
we have selected 8 specific points in the embedding and 
plot them in the corresponding
order below. As can be observed, both digits are highly clustered and there is 
very thin bottleneck (comprised of samples of `0'). 
The
samples with the smallest and largest $x-$coordinate in the embedding are both far away from the bottleneck.
These samples are the form of these digits that is least likely to be misclassified. 
We observe that if one writes a `0' in a very narrow way, there is a 
chance of it looking a lot like
a `1'. 
The left-most digit `0' with a little twirl on top is guaranteed to be a `0' 
because the twirl could not be explained by someone writing a `1'. 
Likewise, the farthest
digit `1', written at a 45 degree angle, is likely not a narrow `0'.

\begin{center}
\begin{figure}[h!]
\includegraphics[width=1.1\textwidth]{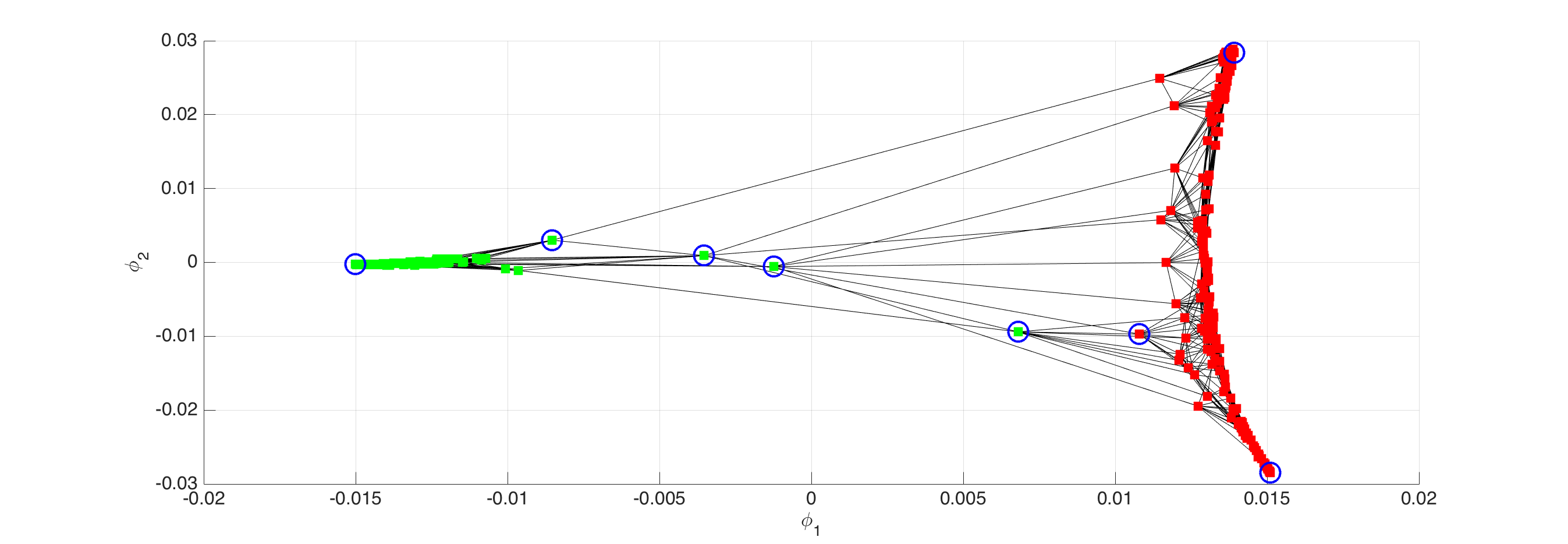}
\includegraphics[width=1.1\textwidth]{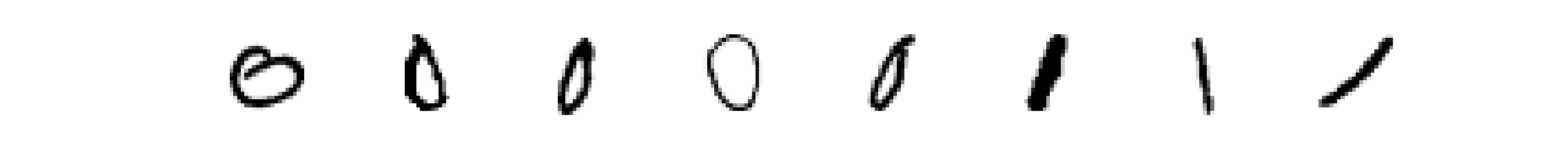}
\captionsetup{width = 0.95\textwidth}
\caption{A spectral embedding into two dimensions clearly reveals two clusters (above). 8 of these points are marked
(below) and we observe that both the extremal cases are far away from the bottleneck (and represent the most `typical' examples
if 0's and 1's).}
\label{fig:specembed}
\end{figure}
\end{center}

\subsection{Main result.} 
We now state the main result of the paper: vertices at which an eigenvector of the Laplacian assumes large values cannot be 
close to the set where the eigenfunction is small.

\begin{theorem} Suppose $\mathcal{L} u = \lambda u$ and that $\varepsilon > 0$ is so large that
$$ B = \left\{ i \in V:    - \varepsilon   \leq u(i) \leq \varepsilon   \right\} \neq \emptyset.$$
Then, for all $i \in V$,
$$ d_{B}(i) \log{\left(  \frac{1}{|1-\lambda|} \right) } 
\geq    \log{\left( \frac{ |u(i)| }{\|u\|_{L^{\infty}}}  \right)}  -  
\log{\left(\frac{1}{2} + \varepsilon\right)}.$$
\end{theorem}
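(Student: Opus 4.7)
The plan is to rewrite the eigenvalue equation as a martingale identity for the random walk and then apply optional stopping at a carefully chosen time. Writing $P = (p_{ij})$ for the transition matrix, the operator decomposes as $\mathcal{L} = I - P$, so $\mathcal{L}u = \lambda u$ becomes $Pu = (1-\lambda)u$, and iterating yields $\mathbb{E}_i[u(x(t))] = (P^t u)(i) = (1-\lambda)^t u(i)$ for every $t \in \mathbb{N}$. This is precisely the statement that
$$ M_t = \frac{u(x(t))}{(1-\lambda)^t} $$
is a martingale with respect to the natural filtration of the walk, well defined whenever $\lambda \neq 1$ (the case $\lambda = 1$ makes the left-hand side of the claim $+\infty$, so the inequality is trivial there).

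Setting $k = d_B(i)$, let $\tau_B = \min\{t \geq 0 : x(t) \in B\}$ be the first hitting time of $B$ and let $\tau = \min(\tau_B, k)$ be its truncation. Since $\tau \leq k$ is bounded, optional stopping for $M_t$ gives
$$ u(i) = M_0 = \mathbb{E}_i\!\left[\frac{u(x(\tau))}{(1-\lambda)^\tau}\right]. $$
Using $|1-\lambda| < 1$ together with $\tau \leq k$, the prefactor satisfies $|1-\lambda|^{-\tau} \leq |1-\lambda|^{-k}$, and taking absolute values produces
$$ |u(i)| \leq |1-\lambda|^{-k}\, \mathbb{E}_i[|u(x(\tau))|]. $$
I would then split this remaining expectation according to whether the walk has reached $B$ by time $k$: on $\{\tau_B \leq k\}$ one has $x(\tau) = x(\tau_B) \in B$, hence $|u(x(\tau))| \leq \varepsilon$; on the complementary event only the trivial bound $|u(x(\tau))| \leq \|u\|_{L^\infty}$ is available, but by the very definition of $d_B(i)$ that event has probability at most $1/2$. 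Combining yields
$$ |u(i)| \leq |1-\lambda|^{-k}\!\left(\varepsilon + \tfrac{1}{2}\|u\|_{L^\infty}\right), $$
and dividing by $\|u\|_{L^\infty}$ and taking logarithms produces the desired inequality (implicitly normalising $\|u\|_{L^\infty} = 1$, the scale on which the statement's $\varepsilon$ is measured).

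The main conceptual step, which I expect to require the most care, is identifying the correct stopping rule and recognising that the eigenfunction equation supplies a deterministic exponential correction $(1-\lambda)^{-t}$ which must be tracked along every trajectory; once $\tau = \min(\tau_B, k)$ is in hand, the rest reduces to bookkeeping with the two contributions to the stopped expectation. A minor technical point worth checking is that $M_t$ remains a genuine martingale and that $|1-\lambda|^{-\tau} \leq |1-\lambda|^{-k}$ holds regardless of the sign (or complex phase) of $1-\lambda$; this is automatic since the scalar $(1-\lambda)$ factors through conditional expectations linearly, and only its modulus enters the absolute-value bound.
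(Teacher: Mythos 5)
Your proof is correct and is essentially the paper's argument in martingale clothing: both rest on $\mathbb{E}_i[u(x(t))] = (1-\lambda)^t u(i)$, the split according to whether the walk hits $B$ within $d_B(i)$ steps, the bound $|u|\le\varepsilon$ on $B$, and the fact that the non-hitting event has probability at most $1/2$. The paper conditions at the fixed terminal time $n=d_B(i)$ and propagates the decay factor $|1-\lambda|^{n-k}$ forward from the hitting time, whereas you stop at $\tau=\min(\tau_B,k)$ and absorb $|1-\lambda|^{-\tau}\le|1-\lambda|^{-k}$; these are the same estimate.
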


Put differently, vertices $i \in V$ for which $|u(i)|$ is not too small are
far away from the bottleneck $B$ for which the spectral embedding vector is ambiguous. Some remarks are in order.\\

\textit{Remarks.}
\begin{enumerate}
\item The result is sharp for $\varepsilon = 0$ (see below for an example).
\item  There are obvious connections to the notion of eigenvector centrality which proposes that the importance of a point in a network can be 
measured by the $|u|$. 
Moreover, according to this heuristic, the point at
which the maximum is assumed is a good candidate for the most `central' 
point in the network. 
Our result implies that more `central' points with respect to this notion 
are located deep inside their respective cluster.
\item We observe that $d_B(i)$ seems to be a remarkably good approximation of $|u(i)|$: indeed,
we believe that understanding the precise relationship between the two objects could be of significant interest. This naturally relates to the
continuous case, where the mean first exit time gives rise to the Filoche-Mayboroda landscape function \cite{arnold, fil, stein3}. Our notion
of diffusion time may be understood as median first exit time. 
\item The constant $1/2$ is not special: one could generalize the
diffusion distance $d_B^{(p)}$ by requiring that $0 < p< 1$ of all Brownian motions have visited $B$; this gives rise to a different inequality with $1/2$
replaced by $1-p$.
\item The result is not restricted to the first eigenvector. Note that while the set $B$ 
depends on the eigenvalue,
its applicability is not restricted to graphs having exactly two clusters.
\end{enumerate}

\subsection{Absorbing states.} The purpose of this section is to give a related result in the special case of the random walk having absorbing states: assume
the edge weights of $G=(V,E)$ are such that there is an absorbing set of vertices $\partial V \subset V$ with the property that every random
walk gets eventually absorbed with likelihood 1. Then the first eigenfunction of $\mathcal{L}$ is intimately linked to absorbtion time (and, more
generally, the diffusion distance $d_{\partial V}$ and the first eigenfunction seem to have very strong correlation, see \S 5).
\begin{theorem} Suppose $\mathcal{L} u = \lambda_1 u$ 
and $u\big|_{\partial V} = 0$,
then
$$ d_{\partial V}(i)  \log{\left(  \frac{1}{|1-\lambda_1|} \right) }  
\geq \log{\left( \frac{2|u(i)|}{\|u\|_{L^{\infty}}}  \right)}.$$
\end{theorem}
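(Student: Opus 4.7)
The plan is to reinterpret the eigenvalue equation $\mathcal{L}u=\lambda_1 u$ as a one-step identity for the random walk, and then use the absorbing hypothesis on $\partial V$ to kill the error term. Expanding the definition of $\mathcal{L}$ and rearranging gives
$$ \mathbb{E}\bigl[u(x(1))\,\big|\,x(0)=i\bigr] \;=\; (1-\lambda_1)\,u(i),$$
which, by the Markov property, iterates to
$$ (1-\lambda_1)^{k}\, u(i) \;=\; \mathbb{E}\bigl[u(x(k))\,\big|\,x(0)=i\bigr] \qquad \text{for every } k\in\NN.$$

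Next I would set $k := d_{\partial V}(i)$ and bound $\mathbb{E}\bigl[|u(x(k))|\bigr]$ by splitting on the event $A = \{\exists\, k' < k:\ x(k') \in \partial V\}$ that the walk has already hit the absorbing set before time $k$. On $A$ the trajectory is trapped in $\partial V$, so $x(k)\in\partial V$ and $u(x(k))=0$; on $A^{c}$ the crude bound $|u(x(k))|\leq \|u\|_{L^{\infty}}$ suffices. Since $\PP(A^{c}) \leq 1/2$ by definition of $d_{\partial V}(i)$, these two estimates combine to give
$$ |1-\lambda_1|^{k}\,|u(i)| \;\leq\; \mathbb{E}\bigl[|u(x(k))|\bigr] \;\leq\; \tfrac{1}{2}\,\|u\|_{L^{\infty}}.$$
Taking logarithms and using $|1-\lambda_1|<1$, so that $\log(1/|1-\lambda_1|)>0$, rearranges this into the stated inequality with $k=d_{\partial V}(i)$ on the left.

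There is essentially no serious obstacle here: the only point worth verifying is that the absorbing hypothesis really does freeze the walk at $\partial V$ after the first hit, so that $u(x(k))=0$ holds throughout the event $A$ and not merely at the hitting time $\tau$. This is immediate from the definition of an absorbing set. In fact, the statement can alternatively be recovered as a direct corollary of Theorem~1 applied with $\varepsilon=0$ to the zero set $B=\{i\in V:u(i)=0\}\supseteq\partial V$, combined with the obvious monotonicity $d_{\partial V}(i)\geq d_{B}(i)$ and the positivity of $\log(1/|1-\lambda_1|)$; the direct proof above is preferable only because the absorbing hypothesis makes the decomposition cleaner than in the general non-absorbing case.
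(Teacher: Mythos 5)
Your proof is correct and follows essentially the same route as the paper: iterate the identity $\mathbb{E}[u(x(k))\,|\,x(0)=i]=(1-\lambda_1)^k u(i)$, take $k=d_{\partial V}(i)$, use absorption to get $u(x(k))=0$ on the event of having hit $\partial V$, and bound the remaining trajectories (of probability at most $1/2$) by $\|u\|_{L^{\infty}}$. Your closing observation that the statement also follows from Theorem~1 with $\varepsilon=0$ via the monotonicity $d_{\partial V}(i)\geq d_{B}(i)$ is a valid minor addition, but the core argument is the paper's.
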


We observe that $|u(i)| > \|u\|_{L^{\infty}}/2$ is required for the result to be nontrivial. The inequality is asymptotically sharp: 
let us consider a complete graph $K_{n}$ with all weights being identically $p_{ij} = 1/n$ and one vertex chosen to be the absorbing state.

\begin{center}
\begin{figure}[h!]
\begin{tikzpicture}[scale=1]
\begin{scope}[every node/.style={circle,thick,draw}]
    \node (A) at (0,0) {};
    \node (B) at (1,0) {};
    \node (C) at (0,1) {};
    \node (D) at (1,1) {};
    \node (E) at (2.5,0.5) {};

\end{scope}

\begin{scope}[every edge/.style={draw=black,very thick}]
    \path [] (A) edge node {} (B);
    \path [] (A) edge node {} (C);
    \path [] (A) edge node {} (D);
    \path [] (B) edge node {} (C);
    \path [] (B) edge node {} (D);
    \path [] (C) edge node {} (D);
    \path [] (A) edge node {} (E);
    \path [] (B) edge node {} (E);
    \path [] (C) edge node {} (E);
    \path [] (D) edge node {} (E);

\end{scope}

\draw [ultra thick] (2.2, 0.2) -- (2.8, 0.2);
\draw [ultra thick] (2.8, 0.8) -- (2.8, 0.2);
\draw [ultra thick] (2.2, 0.2) -- (2.2, 0.8);
\draw [ultra thick] (2.2, 0.8) -- (2.8, 0.8);

\node at (2.8, -0.1) {$\partial V$};

\end{tikzpicture}
\caption{Complete graphs show the result to be tight.}
\end{figure}
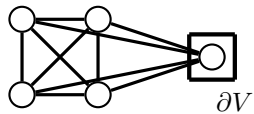
\end{center}

An easy computation shows that the constant vector satisfies
$\mathcal{L}u = \frac{1}{n}u$ and $d_{\partial V}(i)$ 
is the smallest integer with 
$$\left(1 - \frac{1}{n} \right)^{d_{\partial V}(i)} \leq \frac{1}{2}.$$ 
Since
$$ \lim_{n \rightarrow \infty}{ \left(1 - \frac{1}{n} \right)^{n}} = \frac{1}{e}, \quad \mbox{we get that} \quad \lim_{n \rightarrow \infty}{\frac{d_{\partial V}(i)}{n}} = \log{2} \, .$$
Thus, for $n$ large,
$$ d_{\partial V}(i) \log{\left(  \frac{1}{1-\lambda} \right) } \sim  \log{\left(1 + \frac{1}{n-1} \right)}n\log{2} \sim \log{2}.$$
The very same example can be used to show that the main result is sharp in the case, where the eigenfunction actually
has a root $u(i) = 0$. This can be achieved by taking two separate copies of complete graphs $K_n$, adding a free vertex and connecting
the free vertex to all other vertices; a simple computation shows that this reduces the main result to Theorem 2, which
is sharp.

\subsection{Related results.} 
This paper is inspired by similar results in the continuous analogue due 
to the second and third author \cite{manas} 
(earlier results in a similar spirit were given by Georgiev \& 
Mukherjee \cite{georg} and the third author \cite{stein1}).  
Bovier, Eckhoff, Gayrard \& Klein \cite{bov} 
study closely related questions regarding metastable states in 
Markov chains with bottlenecks and 
their relation to the spectrum and distribution of exit times
(see also \cite{bouv1, bouv2}).
For other approaches towards understanding
the success of spectral clustering we refer to
Meila \& Shi \cite{meila} and Ng, Jordan \& Weiss 
\cite{jordan}. 
In this context, we especially emphasize the results of Gavish \& Nadler \cite{gavish}, 
who study the relation between the exit times of diffusion and the normalized cut.

\section{Proofs}

\subsection{$\mathcal{L}$ and random walks.}
First, we note that the spectrum of $\mathcal{L}$ satisfies:
$$ \sigma(\mathcal{L}) \subset \left\{z \in \mathbb{C}: |1-z| \leq 1 \right\}.$$
This follows trivially from the Gershgorin circle theorem since, 
$$ \mathcal{L}_{ii} = 
1 - p_{ii} \, , \, \text{and } 
\sum_{\substack{j=1\\j \neq i}}^{n} \mathcal{L}_{ij} = 1 - p_{ii} \, .$$
We quickly describe the underlying connection between $\mathcal{L}$ and 
random walks on $G$,
this connection will be a crucial tool for all subsequent proofs. 
Fix a vertex $i \in V$, let $x_0 = i$ and as before let $x_{n}$ for
$n \in \mathbb{N}$, 
denote the random walk associated with a Markov chain on the graph.
By definition of the Graph Laplacian
\begin{align*}
 \mathbb{E}(u(x_{n+1})\big| x_n ) &= 
 \sum_{j \in V}{ \mathbb{P}\left(x_{n+1} = j\right) u(j)} = 
 \sum_{j \in V}{ p_{ij} u(j)} \\
&=u(x_n) +  \sum_{j \in V}{ p_{ij} (u(j) - u(x_n))} \\
&= u(x_n) - \left( \mathcal{L}u \right)(x_n)
\end{align*}
If $\mathcal{L}u = \lambda u$, then we get
$$ \mathbb{E}(u(x_{n+1}) \big| x_n)  = (1-\lambda) u(x_n)$$
and, by induction,
$$ \mathbb{E}(u(x_{n})) = (1- \lambda)^n u(x_0).$$

\subsection{Proof of Theorem 1}
\begin{proof} Let us assume that $u$ solves $\mathcal{L}u = \lambda u$, normalized to $\|u\|_{L^{\infty}} = 1$, let $i \in V$ be arbitrary and assume w.l.o.g. (after possibly replacing $u$ by $-u$)  that $0 < u(i)  < 1$.
As before, we start random walks in $x_0 = i$ whose transition
probabilities  are given by $p_{ij}$. 
We see that, 
$$ \mathbb{E}(u(x_{n})) = (1- \lambda)^n u(x_0).$$
We fix the value $n = d_{B}(i)$ and make a distinction between those random walks that never enter the set $B$ up to time $n$ (which we call event $A$)
and those random walks that are contained in $B$ at some point (event $\neg A$). 
We have
$$ |\mathbb{E}(u(x_n))| =  |\mathbb{E}(u(x_n) \big| A)|\cdot \mathbb{P}(A) +  |\mathbb{E}(u(x_n)\big| \neg A)| \cdot \mathbb{P}(\neg A).$$  
Trivially,
$$  |\mathbb{E}(u(x_n) \big| A)| \mathbb{P}(A) \leq  \mathbb{P}(A).$$
In the second case, the random walk entering the set $B$ at some point $x_k$, we can employ the Markovian property and conclude that
$$ |\mathbb{E}(u(x_n)\big| \neg A)| \mathbb{P}(\neg A) \leq \varepsilon   |1- \lambda|^{n-k} \mathbb{P}(\neg A) \leq \varepsilon \mathbb{P}(\neg A).$$
Altogether, this implies
\begin{align*}
u(i) |1- \lambda|^n &=  u(i) |1- \lambda|^n u(x_0) = \mathbb{E}(u(x_{n})) \\
&\leq   \mathbb{P}(A)  + \varepsilon   \mathbb{P}(\neg A).
\end{align*}
It follows from the definition of diffusion time that
$$ \mathbb{P}(\neg A) \geq \frac{1}{2} \, .$$
Thus
$$ u(i) |1- \lambda|^{d_B(i)} \leq \frac{1}{2} + \varepsilon$$
from which the result follows.
\end{proof}

\subsection{Proof of Theorem 2.}
\begin{proof}
 Assume that the eigenfunction is normalized as $\|u\|_{L^{\infty}} = 1$ and let $i \in V$ such that $1/2 < u(i) < 1$
and let $n = d_{\partial V}(i)$. Then, we can analyze the expectation in
$$ (1-\lambda)^n u(i) = \mathbb{E}(u(x_{n}))$$
by concluding that at least in half of all cases we get 0 (this follows from the definition of the diffusion distance) -- in the other cases, we do not know what to expect but the contribution
can certainly not be larger than $u(i)$ since this is the maximal value; therefore
$$  |1-\lambda|^n u(i) \leq   |(1-\lambda)^n u(i)| = |\mathbb{E}(u(x_{n}))| \leq \frac{1}{2}$$
and this implies the statement. 
\end{proof}

\section{The continuous case: hot spots}
These results have a continuous equivalent that may be of independent interest and has some applications to the hot spots problem. Let $\Omega \subset \mathbb{R}^2$ be a convex set with smooth boundary and assume
\begin{align*}
-\Delta u &= \lambda u \qquad\mbox{in}~ \Omega \\
\frac{\partial u}{\partial n} &= 0  \qquad \mbox{on}~\partial \Omega,
\end{align*}
where $\lambda > 0$ is assumed to be a nontrivial eigenvalue. Note that
$$\int_{\Omega}{u(x)~dx} = -\frac{1}{\lambda}\int_{\Omega}{\Delta u(x)~dx}  = - \frac{1}{\lambda} \int_{\partial \Omega}{\frac{\partial u}{\partial \nu}~d\sigma} =  0,$$ the nodal set is not the empty set $$\mathcal{N} = \left\{x \in \Omega: u(x) = 0\right\} \neq \emptyset.$$
We will now show that both maximum and minimum are both a nontrivial distance away from the nodal set. The technique is given
by a variant of the argument used in \cite{manas}. 

\begin{theorem} 
Let $\Omega \subset \mathbb{R}^n$ be bounded with smooth boundary. 
Suppose $u$ assumes its global maximum or minimum at $x \in \Omega$. 
Let $d_{\mathcal{N}}(x)$ denotes the expected time for half of all 
Brownian motions started in $x$ and
reflected off the boundary $\partial \Omega$ to hit $\mathcal{N}$. 
Then $$ d_{\mathcal{N}}(x) \lambda \geq \log{2}.$$
\end{theorem}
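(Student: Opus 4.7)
The plan is to transpose the argument of Theorem 2 to the continuous setting, replacing the random walk on $V$ by reflected Brownian motion on $\Omega$ and the one-step averaging identity $\mathbb{E}(u(x_{n+1})\mid x_n) = (1-\lambda)u(x_n)$ by its continuous counterpart $\mathbb{E}_x[u(X_t)] = e^{-\lambda t}u(x)$. This exponential decay formula is the continuous-time eigenfunction property: once the reflected Brownian motion $X_t$ is normalized so that its generator is $\Delta$ (acting on functions in the domain of the Neumann Laplacian), $u$ is an eigenfunction of the semigroup with eigenvalue $e^{-\lambda t}$. Equivalently, applying It\^o's formula to $u(X_t)$ and using $\partial u/\partial \nu = 0$ kills the boundary-local-time term, leaving $-\lambda u(X_t)\,dt$ plus a martingale, which after taking expectations yields the formula.

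Without loss of generality, by replacing $u$ with $-u$ if the extremum at $x$ is a minimum, assume $u(x) = \|u\|_{L^\infty}$. Set $T = d_{\mathcal{N}}(x)$ and let $\tau = \inf\{t \geq 0 : X_t \in \mathcal{N}\}$. By the definition of $d_{\mathcal{N}}(x)$ as (essentially) a median hitting time, $\mathbb{P}(\tau \leq T) \geq 1/2$. Decompose
\begin{equation*}
e^{-\lambda T} u(x) \;=\; \mathbb{E}_x[u(X_T)] \;=\; \mathbb{E}_x[u(X_T)\mathbf{1}_{\tau > T}] + \mathbb{E}_x[u(X_T)\mathbf{1}_{\tau \leq T}].
\end{equation*}
The first term is at most $\|u\|_{L^\infty}\,\mathbb{P}(\tau > T) \leq u(x)/2$. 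For the second term, the strong Markov property at $\tau$ gives $\mathbb{E}_x[u(X_T)\mid \mathcal{F}_\tau] = e^{-\lambda(T-\tau)}u(X_\tau) = 0$ on $\{\tau \leq T\}$, since $u(X_\tau)=0$ on the nodal set. Hence $e^{-\lambda T}u(x) \leq u(x)/2$, and since $u(x) > 0$ at a global maximum of a sign-changing eigenfunction, taking logs yields $\lambda T \geq \log 2$.

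The main obstacles are analytic rather than probabilistic. The principal one is justifying the Feynman--Kac style identity $\mathbb{E}_x[u(X_t)] = e^{-\lambda t}u(x)$ for reflected Brownian motion on a domain with smooth but curved boundary; this wants either a clean semigroup argument in $L^2(\Omega)$ or a careful It\^o expansion with Skorokhod-type boundary local time that vanishes against $\partial u/\partial\nu$. A secondary bookkeeping issue is fixing the normalization of the Brownian motion so that the constant in the conclusion comes out exactly as $\log 2$ rather than $2\log 2$ (i.e.\ identifying the generator as $\Delta$ rather than $\tfrac{1}{2}\Delta$, matching the way the discrete averaging operator is normalized above). A minor regularity point is that $\mathcal{N}$ is closed as the zero set of a continuous eigenfunction, so $\tau$ is a genuine stopping time and $X_\tau\in\mathcal{N}$ almost surely on $\{\tau<\infty\}$, which legitimizes the use of $u(X_\tau)=0$ above.
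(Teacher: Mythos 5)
Your proposal is correct and follows essentially the same route as the paper: the Feynman--Kac identity $\mathbb{E}_x[u(X_t)] = e^{-\lambda t}u(x)$ for reflected Brownian motion, combined with the decomposition of paths according to whether they have hit $\mathcal{N}$ by the median hitting time $T = d_{\mathcal{N}}(x)$, with the non-hitting paths contributing at most $u(x)/2$. The only cosmetic difference is that the paper handles the hitting paths by restricting to the nodal domain $\Omega_1$ and using the absorbed/reflected mixed process there, whereas you condition via the strong Markov property on the full reflected process; these are equivalent, and your flagged analytic caveats (generator normalization, boundary local time, closedness of $\mathcal{N}$) are exactly the right ones.
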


It is easy to see that this is the correct scaling: consider the eigenfunction $u(x) = \cos{(n x)}$ on $\Omega = [0, \pi]^2$. The eigenvalue is $\lambda = n^2$, the
extrema are a Euclidean distance $\sim n^{-1}$ away from the set $\mathcal{N}$. The appropriate scaling for a significant fraction of Brownian motion started
in the extrema to hit $\mathcal{N}$ is $\sim n^{-2}$. Under additional assumptions on domain and eigenfunction, it is possible to recover information about the
Euclidean distance; more precisely, for convex $\Omega$ and $\lambda$ being the first eigenvalue, it is possible to obtain a result along the lines of
$$ d_{\mathcal{N}}(x) \sim \mbox{dist}(x, \partial \Omega)^2$$
with implicit constants depending only on the dimension (see the proofs for details).
Melas \cite{melas},
proving a conjecture of Payne \cite{payne}, has shown that the first nontrivial Laplacian eigenfunction with Neumann boundary conditions on a convex domain
with $C^{\infty}-$boundary has a nodal line that intersects the boundary in exactly two points (splitting the domain). 
We cannot specify in advance where maximum or minimum is going to occur, the inequality allows us to specify a subregion in which they must lie (see Fig. \ref{bone-dis}).
\begin{center}
\begin{figure}[h!]
\includegraphics[width=0.85\textwidth]{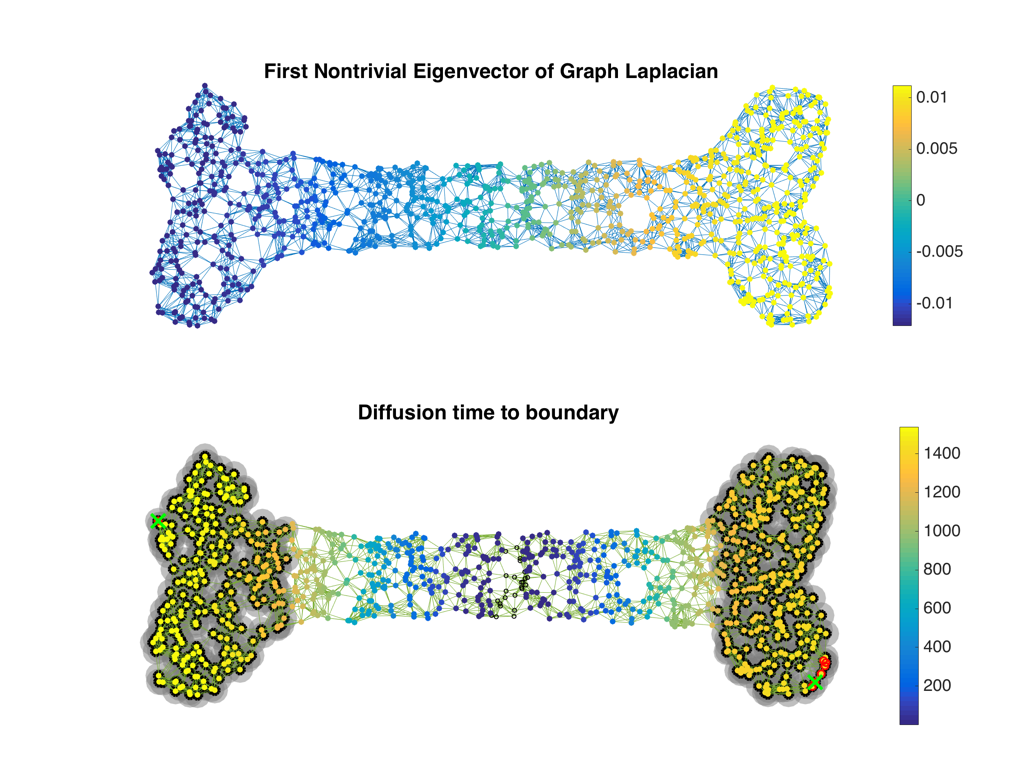}
\captionsetup{width = 0.95\textwidth}
\caption{
\label{bone-dis}
The maximum of the first eigenfunction $v_1$ is indicated by the green cross, and those of the diffusion time by red circles, in both left and right sides. On both sides, the two maxima coincide. 
The set $\left\{ i \in V: -d_{B}(i) \log{\left( |1-\lambda_1| \right) } \geq    \log{\left( 2\right)} \right\}$ is drawn in yellow.
}
\end{figure}
\end{center}

\textbf{Example.} 
Consider the discrete pseudo-planar example shown in Figure~\ref{bone-dis}. 
Here, $n=1000$ nodes are randomly sampled within a domain and a 
graph is built by connecting 10 nearest neighbors of each node and 
symmetrizing the adjacency matrix.
The first (nontrivial) eigenvector of the graph Laplacian achieves maximum and minimum on the two ends of the domain. 
On the right side, we obtain $\max d_{B} = 1428$ while our 
lower bound predicts that the diffusion time in the minimum is 
at least $1201$. 
On the left-hand side, we obtain
$\max d_{B} = 1540$ and are guaranteed at least  $1246$ from our 
lower bound. 
It is remarkable that the correlation between the absolute value of 
the first eigenfunction and $d_B$ is 
as high as 0.9866: the diffusion 
time to the boundary (here: bottleneck) is a very good approximation 
of the first eigenfunction.

\subsection{Proof of Theorem 3}
\begin{proof} We will essentially repeat the argument from the proof of Theorem 2 in spirit. However, while it is certainly possible to approximate the continuous
object by a graph and conclude Theorem 3 directly from Theorem 2, we wish to avoid certain technicalities involved with that and will give a fully continuous
proof. Let $\Omega \subset \mathbb{R}^n$ with smooth boundary and assume $u \neq 0$ satisfies
\begin{align*}
-\Delta u &= \lambda u \qquad\mbox{in}~ \Omega \\
\frac{\partial u}{\partial n} &= 0  \qquad \mbox{on}~\partial \Omega,
\end{align*}
where $\lambda > 0$ is assumed to be a nontrivial eigenvalue. The mean value of the function is 0, which implies that the nodal set
$$ \mathcal{N} = \left\{x \in \Omega: u(x) = 0\right\}$$
is not empty.  Since $\partial \Omega$ is assumed to be smooth, we can use the Feynman-Kac formula
for the Neumann problem \cite{hsu, stein1} and write, for every $t \geq 0$,
$$ u(x) = e^{\lambda t} ~\mathbb{E}_{x} \left( u(\omega(t))   \right),$$
where $\mathbb{E}_x$ is taken with respect to all Brownian motion started in $x$, running until time $t$ and reflected
off the boundary $\partial \Omega$. Let us now assume that $x\in \Omega$ is the 
location of the maximum (a similar argument holds for the minimum). 
It is not difficult to see that the solution of the equation
restricted to the connected component of $\left\{x \in \Omega: u(x) > 0\right\}$ (which we denote by $\Omega_1$) 
also satisfies the equation
\begin{align*}
-\Delta u &= \lambda u \qquad\mbox{in}~ \Omega_1 \\
u &= 0 \qquad\mbox{in}~ \mathcal{N} \cap \partial \Omega_1 \\
\frac{\partial u}{\partial n} &= 0  \qquad \mbox{on}~\partial \Omega_1\setminus \mathcal{N}.
\end{align*}
We can now use the probabilistic interpretation with respect to this 
new problem: $\omega(t)$ will denote a Brownian
motion started in $x$ and running for time $t$ that is absorded on $\mathcal{N} \cap \partial \Omega_1$ and reflected
on $\partial \Omega_1\setminus \mathcal{N}$. We denote the probability of such a Brownian motion impacting on
 $\mathcal{N} \cap \partial \Omega_1$ within time $t$ as $p_t(x)$.
It is now easy to see that
$$
 u(x) = e^{\lambda t} ~\mathbb{E}_{x} \left( u(\omega(t))   \right)  \leq  e^{\lambda t} u(x) p_t(x).
$$
If we set $t = d_{\mathcal{N}}(x)$, then -- by definition -- $p_t(x) = 1/2$ and thus
$$2 \leq e^{\lambda \cdot d_{\mathcal{N}}(x)},$$
which is the desired statement.
\end{proof}

The reason why this argument does not immediately translate into a statement for the Euclidean distance $\mbox{dist}(x, \partial \Omega)$
is that Brownian motion is reflected 
on $\partial \Omega_1 \setminus \mathcal{N}$. For complicated 
labyrinth-type domains, it is 
possible for the maximum to be assumed in a point that is very close to $\mathcal{N}$ with respect to Euclidean distance but far away
in terms of diffusion distance. 
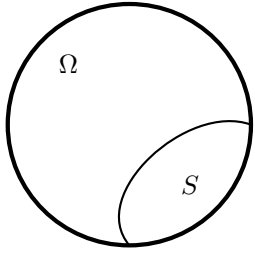
\begin{figure}[h!]
\begin{tikzpicture}[scale=0.8]
\draw [ultra thick] (2,2) circle (2cm);
    \draw [thick] (2,0) to[out=130,in=160] (4,2);
\node at (3,1) {$S$};
\node at (1,3) {$\Omega$};
\end{tikzpicture}
\captionsetup{width = 0.95\textwidth}
\caption{For convex $\Omega \subset \mathbb{R}^n$, every subset $S \subset \Omega$ has to have a large boundary $\partial S \cap \Omega$ with large $(n-1)-$dimensional volume unless $S$ or $\Omega \setminus S$ is small.}
\end{figure}
If $\Omega$ is convex and we consider, say, the first (nontrivial) eigenfunction $u$, then this scenario can be excluced: by considering $S = \left\{ x \in \Omega: u(x) > 0\right\}$,
it becomes clear that we need to ensure that $\partial S \cap \Omega$ cannot be too small. 
A result of the third author \cite{stein2} 
(refining a result of Dyer \& Frieze \cite{dyer}) states
that for open and convex $\Omega \subset \mathbb{R}^n$ and all open 
subsets $A \subset \Omega$
$$\mathcal{H}^{n-1}\left(\partial A \cap \Omega\right) \geq \frac{4}{\diam(\Omega)}\frac{|A||\Omega \setminus A|}{|\Omega|},$$
where $\mathcal{H}^{n-1}$ denotes the $(n-1)-$dimensional Hausdorff measure. An easy compactness argument shows that for the first 
eigenfunction $|S| \gtrsim_n |\Omega|$ for some implicit 
constant depending only on the dimension from
which the equivalence of $d_{\mathcal{N}}$ and $\mbox{dist}(\cdot, \mathcal{N})^2$
(up to constants) follows.

\section{Variations, remarks and comments}
Since the argument itself is rather elementary, it is not surprising that there should be a series of natural variations.
The purpose of this section is to outline some of them and remark on some additional interesting features.

\subsection{Sharpness of the inequality.} We observe that the result is close to sharp in a variety of different settings (this includes
both relatively sparse and relatively dense graphs). \\

\textit{Small world graphs.} 
Consider a small world graph with 128 nodes on a ring. 
The absorbing boundary, $\partial V$ is 8 randomly selected vertices.
Furthermore, additional edges are 
generated between any two vertices in an i.i.d. way, such that the 
expected number of additional edges is 64. 
A typical realization is shown in Figure \ref{fig:sw-sparse}.
We have $\max d_{\partial V} = 37$ 
while the inequality predicts at least $24$.  
The first eigenfunction is a very good
approximation of $d_{\partial V}$; their correlation is 0.9862.
\begin{center}
\begin{figure}[h!]
\includegraphics[width=\textwidth]{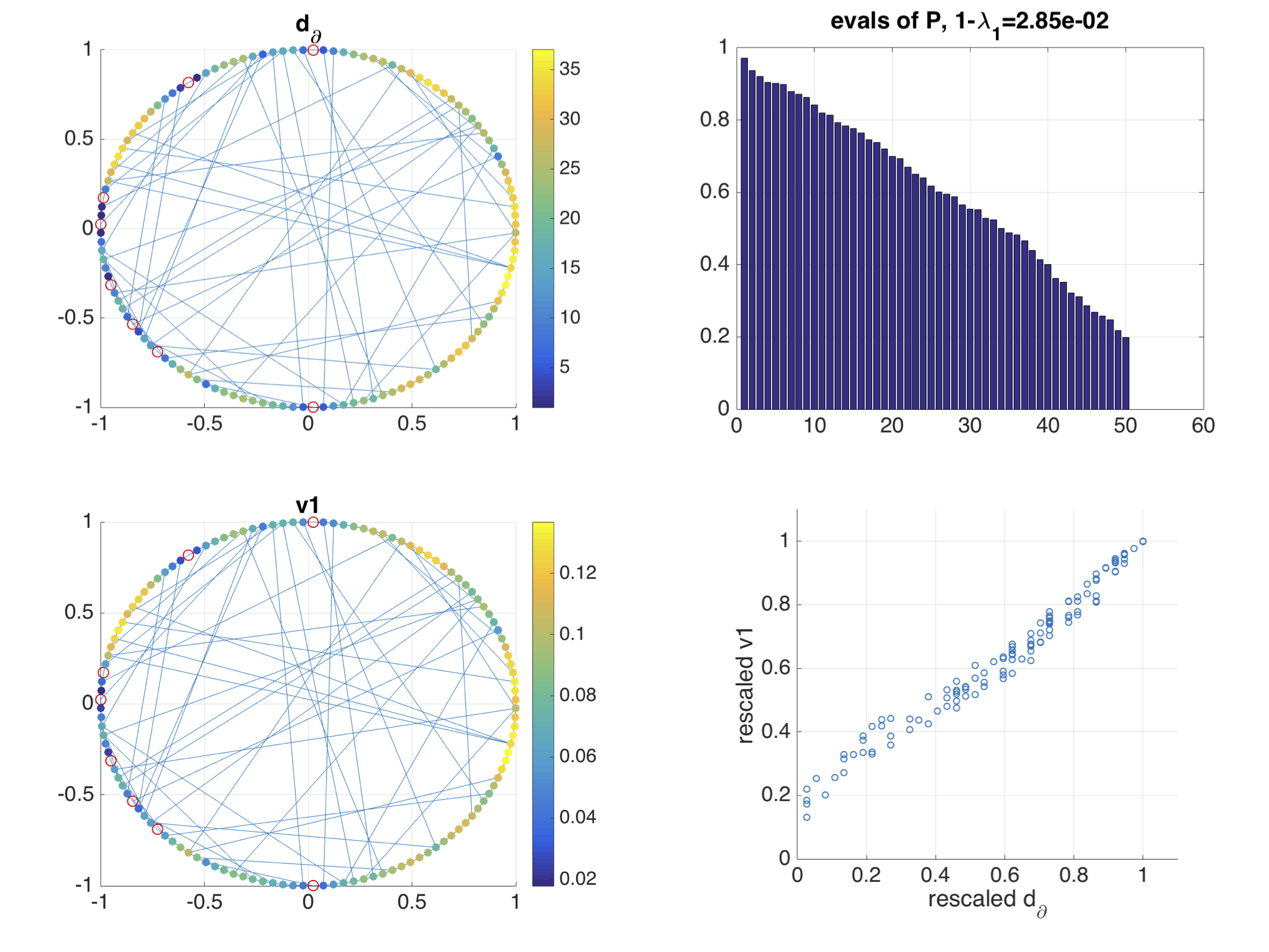}
\captionsetup{width=0.95 \textwidth}
\caption{\label{fig:sw-sparse}
The diffusion time to boundary $d_{\partial V}$ (top) and the first eigenvector $v_1$ (bottom) on a small world graph
The scatter plot (bottom right) shows $v_1$ and $d_{\partial V}$ rescaled to have maximum 1, respectively.
}
\end{figure}
\end{center}

\textit{Dense graphs.} The second example, shown in Fig. \ref{fig:sw-dense}, has the same setup but a larger number of connections.
We find that $\max d_{\partial V} = 15$ while the inequality predicts at least $13$. The correlation between the first eigenfunction and $d_{\partial V}$
is 0.9897.
\begin{center}
\begin{figure}[h!]
\includegraphics[width=\textwidth]{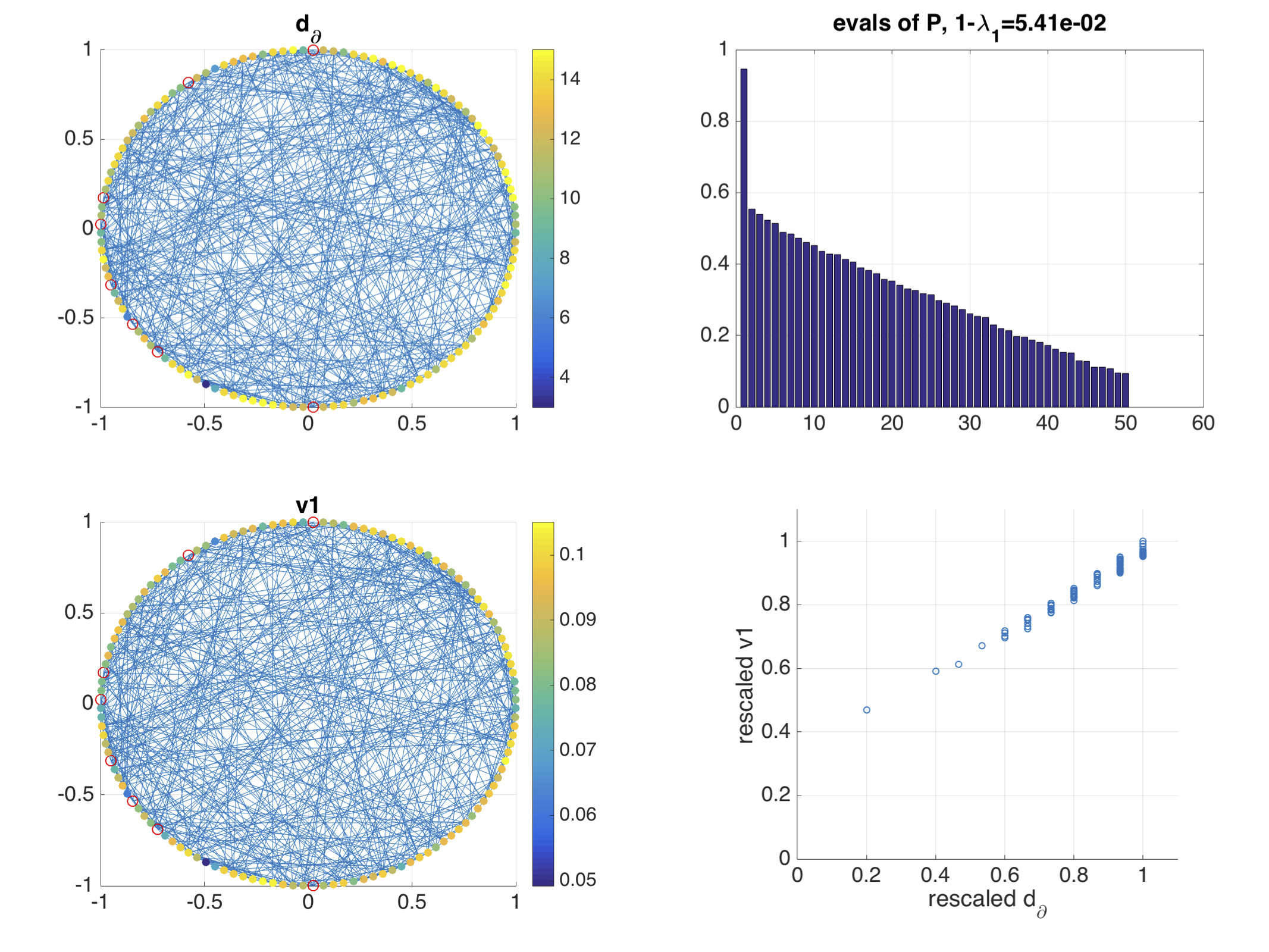}
\caption{\label{fig:sw-dense}
The first eigenvector and diffusion time to boundary $d_{\partial V}$ on another graph.
}
\end{figure}
\end{center}
\vspace{-10pt}
These examples motivate the following question.
\begin{quote} \textbf{Problem.} 
To what extent does $d_{\partial V}$ approximate the first eigenfunction?
\end{quote}
The examples considered above show a very large correlation. It is not difficult to see that correlation by itself is not the right notion to use since
one can construct examples in which the first eigenfunction localizes to a much stronger extent than $d_{\partial V}$, which is large in a variety
of different places -- however, in that case we would still expect $d_{\partial V}$ to approximate the first eigenfunction very well on the domain
where it is large (and, in particular, a large correlation provided we only compute the correlation on that domain).
We observe that similar connections exists in the continuous setting (see \cite{fil, manas, stein3}) 
but are not fully understood there either.

\subsection{Families of extremizers.} 
As was discussed after stating Theorem 2, the complete graph $K_n$ with one vertex selected as boundary
point shows that the constant $\log{2}$ cannot be improved (by letting $n \rightarrow \infty$). The purpose of
this section is to show that there is a much larger family of graphs for which this is the case. 

\begin{proposition} 
Let $G_n = (V_n, E_n)_{n=1}^{\infty}$ be a sequence of graphs and 
let $\partial V_{n}$ be a single vertex which is an absorbing state. 
Furthermore, assume that
$$ \forall~ i \in V_n\setminus \partial V_n: ~\sum_{i \sim j \atop j \in \partial V_n}{p_{ij}} \quad \mbox{is independent of}~i~\mbox{and tends to 0 as}~n \rightarrow \infty.$$
Then there exists an eigenfunction with eigenvalue $\lambda_n$ 
with maximum at $i_n \in V_n$ such that
$$ d_{\partial V}(i_n) \log{\left( \frac{1}{1 - \lambda_n}\right)}  \rightarrow \log{2}.$$
\end{proposition}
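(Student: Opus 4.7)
The plan is to exhibit a concrete eigenfunction for which both quantities in the product admit essentially closed-form expressions. Write $\alpha_n := \sum_{j \sim i,\, j \in \partial V_n} p_{ij}$, which by hypothesis depends only on $n$ and tends to $0$. I would take the function $u : V_n \to \mathbb{R}$ defined by $u(i) = 1$ for $i \in V_n \setminus \partial V_n$ and $u \equiv 0$ on $\partial V_n$. For any non-absorbing vertex $i$, the only nonzero contributions to $(\mathcal{L} u)(i) = -\sum_{j} p_{ij}(u(j) - u(i))$ come from the edges into the absorbing state, and each contributes $p_{ij}$, so $(\mathcal{L} u)(i) = \alpha_n = \alpha_n u(i)$; at the absorbing vertex the Laplacian vanishes because the loop probability there is $1$. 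Thus $u$ is an eigenfunction with eigenvalue $\lambda_n = \alpha_n$, whose maximum is attained at every $i_n \in V_n \setminus \partial V_n$.

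Next I would compute $d_{\partial V_n}(i_n)$ directly. The hypothesis that $\sum_{j \in \partial V_n} p_{ij}$ equals the same $\alpha_n$ for every non-absorbing $i$ means that, by the Markov property, the probability of being absorbed at the next step is always exactly $\alpha_n$ as long as the walk has not yet been absorbed. Hence the time to absorption started from $i_n$ is geometric with parameter $\alpha_n$, and the probability of avoiding $\partial V_n$ during the first $k$ steps is exactly $(1-\alpha_n)^k$. The definition of the diffusion distance then yields
$$d_{\partial V_n}(i_n) = \left\lceil \frac{\log 2}{\log(1/(1-\alpha_n))} \right\rceil.$$

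Multiplying by $\log(1/(1-\lambda_n)) = \log(1/(1-\alpha_n))$, the product lies in the interval $[\log 2,\, \log 2 + \log(1/(1-\alpha_n))]$; since $\alpha_n \to 0$, the upper endpoint converges to $\log 2$ and the claim follows. I do not expect any real obstacle here: the hypothesis is engineered so that the hitting process is memoryless with a uniform small parameter, which makes the geometric computation exact, and Theorem 2 already supplies the matching lower bound $\geq \log 2$. The only minor care needed is the integer-rounding in the ceiling, but its contribution is bounded by $\log(1/(1-\alpha_n)) \to 0$ and thus harmless.
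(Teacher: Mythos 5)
Your proposal is correct and follows essentially the same route as the paper: identify the indicator of $V_n\setminus\partial V_n$ as an explicit eigenfunction with eigenvalue $\alpha_n$, use the uniformity of the absorption probability to compute $d_{\partial V}(i_n)=\lceil \log 2/\log(1/(1-\alpha_n))\rceil$ exactly, and let the ceiling error vanish as $\alpha_n\to 0$. (Your eigenvalue $\lambda_n=\alpha_n$ is the correct one, consistent with the paper's $K_n$ example, even though the paper's displayed formula for $\lambda$ contains an apparent sign typo.)
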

\begin{proof} It is easy to see that under these assumptions the constant vector
is an eigenvector. The eigenvalue is given as
$$ \lambda =  \sum_{i \sim j \atop j \in \partial V_n}{p_{ij}} -1,$$
where the value of $i$ is not important as the sum does, by assumption, not depend on that parameter. The diffusion time to the boundary is given
as
$$ \min \left\{k \in \mathbb{N}: \left(1- \sum_{i \sim j \atop j \in \partial V_n}{p_{ij}}  \right)^k \leq \frac{1}{2} \right\} = \left\lceil \frac{-\log{2}}{\log{\left(1- \sum_{i \sim j \atop j \in \partial V_n}{p_{ij}} \right)}} \right\rceil$$
from which the statement follows as long as the quantity tends to infinity, which requires
$$ \sum_{i \sim j \atop j \in \partial V_n}{p_{ij}} \rightarrow 0.$$
\end{proof}
This makes it quite easy to construct infinite families of graphs for which equality is asymptotically attained. One possible example
is given by the circle graph $C_n$ if we add an additional vertex, which we define to be the boundary, and impose transition probabilities
to be
$$ \mathbb{P}\left(  \mbox{interior point} \rightarrow \mbox{boundary point}   \right) = \varepsilon$$
for some $\varepsilon$ small; the transition probabilities within $C_n$ can be chosen in an arbitrary fashion (as long as they add up to $1-\varepsilon$).

\subsection{Schr\"odinger-type equations.} 
Akin to arguments in \cite{manas}, 
we can extend our main result to more general equations
of the type
$$ \mathcal{L}u = Wu,$$
where $W=\mbox{diag}(w_1, \dots, w_n)$ is assumed to be a diagonal matrix; 
the role of the `eigenvalue' is then played
by the supremum-norm of the diagonal 
$$ \|W\|_{L^{\infty}} = \max_{}{|w_i|} \, .$$
The setup coincides with the eigenvalue case, if $W$ is a multiple of the identity.
We observe that under the
assumption that the graph is connected and the absorbing set is nonempty $\partial V \neq \emptyset$, then the diffusion
process induced by $\mathcal{L}$ ultimately transports mass to the boundary and therefore, for every vector $v$,
$$ \lim_{k \rightarrow \infty}{ \mathcal{L}^k v}  = 0.$$
Thus, the equation $\mathcal{L}u = Wu$ implies that $\|W\|_{L^{\infty}} < 1$.

\begin{corollary} 
Suppose $\mathcal{L} u = W u$ with $u\geq 0$ and $u|_{\partial V}= 0$. Then
$$ d_{\partial V}(i)  \log{\left(  \frac{1}{1- \|W\|_{L^{\infty}}} \right) }  \geq \log{\left( \frac{2|u(i)|}{\|u\|_{L^{\infty}}}  \right)}.$$
\end{corollary}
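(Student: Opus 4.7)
The plan is to adapt the probabilistic argument used in the proof of Theorem 2, replacing the scalar factor $(1-\lambda)^n$ by a trajectory-dependent product generated by the diagonal perturbation $W$. Rewriting $\mathcal{L} u = W u$ as $(I-W) u = P u$, where $P = (p_{ij})$ is the transition matrix, and evaluating at a vertex $i$ yields
$$(1-w_i)\,u(i) \;=\; \sum_j p_{ij}\, u(j) \;=\; \mathbb{E}\bigl(u(x_1) \,\big|\, x_0 = i\bigr).$$
Iterating this along a random walk and invoking the Markov property (equivalently, checking that $N_k = u(x_k)\big/\prod_{j=0}^{k-1}(1-w_{x_j})$ is a martingale), I would derive the Feynman--Kac type identity
$$u(i) \;=\; \mathbb{E}_i\!\left[\frac{u(x_{\tau \wedge n})}{\prod_{k=0}^{\tau \wedge n - 1}(1-w_{x_k})}\right]$$
via optional stopping at the bounded time $\tau \wedge n$, where $\tau = \inf\{k : x_k \in \partial V\}$. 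Stopping at $\tau$ also sidesteps any issue with the (unconstrained) values of $w_i$ on $\partial V$.

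Next I would use the two structural hypotheses. Since $|w_i| \le \|W\|_{L^\infty} < 1$, we have $1-w_{x_k} \ge 1-\|W\|_{L^\infty} > 0$ pointwise, so
$$\frac{1}{\prod_{k=0}^{\tau \wedge n - 1}(1-w_{x_k})} \;\le\; \frac{1}{(1-\|W\|_{L^\infty})^{\tau \wedge n}} \;\le\; \frac{1}{(1-\|W\|_{L^\infty})^{n}}.$$
Combined with $u \ge 0$, the integrand in the identity above is nonnegative, and the pointwise bound survives taking expectations.

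Finally, I would set $n = d_{\partial V}(i)$ and split according to whether the walk has been absorbed. On the event $\{\tau \le n\}$, which has probability at least $1/2$ by the definition of the diffusion distance, the walk has reached $\partial V$ and $u(x_{\tau \wedge n}) = 0$; this event contributes nothing. On the complement $\{\tau > n\}$ we bound $u(x_n) \le \|u\|_{L^\infty}$ and use the estimate above. Altogether
$$u(i) \;\le\; \mathbb{P}(\tau > n)\cdot \frac{\|u\|_{L^\infty}}{(1-\|W\|_{L^\infty})^{d_{\partial V}(i)}} \;\le\; \frac{\|u\|_{L^\infty}}{2\,(1-\|W\|_{L^\infty})^{d_{\partial V}(i)}},$$
and taking logarithms recovers the claimed inequality.

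The main obstacle is precisely the step that forces the nonnegativity hypothesis $u \ge 0$: without it the trajectory-dependent product $\prod_k(1-w_{x_k})$ could not be replaced by its uniform worst-case lower bound $(1-\|W\|_{L^\infty})^n$ while preserving the direction of the estimate, because the sign of the integrand would fluctuate. In the eigenvalue case $W = \lambda I$ this difficulty is invisible since every factor equals the deterministic constant $1-\lambda$, and the present generalization makes the role of $u \ge 0$ explicit.
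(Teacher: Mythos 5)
Your proof is correct and takes essentially the same route as the paper, which simply iterates the one-step bound $\mathbb{E}(u(x_{n+1})\mid x_n) = (1-w_{x_n})u(x_n) \geq (1-\|W\|_{L^{\infty}})u(x_n)$ (valid precisely because $u \geq 0$) and then splits according to absorption exactly as in Theorem 2; your martingale/optional-stopping formulation is just a more explicit packaging of that iteration. You correctly identify where $u \geq 0$ enters, and in fact your version states the direction of the one-step comparison correctly, whereas the inequality displayed in the paper's proof has its sign reversed.
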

\begin{proof} The proof is essentially identical to the Proof of Theorem 2 once we use
$$ \mathbb{E}(u(x_{n+1})\big| x_n ) = u(x_n) - \mathcal{L} u(x_n) \leq u(x_n) - \|W\|_{L^{\infty}}u(x_n).$$
\end{proof}

There is also a natural variant of Theorem 1 that can be obtained via the same argument.\\

\textbf{Acknowledgement.} The authors are grateful to Raphy Coifman, Jianfeng Lu and Boaz Nadler for valuable discussions.

\vspace{-10pt}

\end{document}